\numberwithin{equation}{section}
\numberwithin{figure}{section}
\theoremstyle{plain}
\newtheorem{thm}{Theorem}
\newtheorem{lem}[thm]{Lemma}
\newtheorem{prop}[thm]{Proposition}
\theoremstyle{definition}
\newtheorem*{defn*}{Definition}
\newtheorem*{example*}{Example}
\newcommand{\orcid}[1]{\href{https://orcid.org/#1}{\includegraphics[width=10pt]{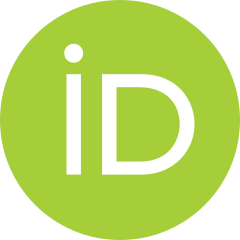}  orcid.org/#1}}
\title{\vspace{-0.5cm}\LARGE \textsc{\scalebox{0.92}[1.0]{A Generalization of the Pearson Correlation}\\\scalebox{0.92}[1.0]{to Riemannian Manifolds}}\vspace{-0.5cm}}
\author{\large P. Michl \orcid{0000-0002-6398-0654}}
\date{}
\begin{document}
\twocolumn[ 
\begin{@twocolumnfalse}  

\maketitle

\begin{abstract}
\vspace{-1.9cm}
\begin{adjustwidth}{10mm}{10mm}
The increasing application of deep-learning is accompanied by a shift towards highly non-linear statistical models. In terms of their geometry it is natural to identify these models with Riemannian manifolds. The further analysis of the statistical models therefore raises the issue of a correlation measure, that in the cutting planes of the tangent spaces equals the respective Pearson correlation and extends to a correlation measure that is normalized with respect to the underlying manifold. In this purpose the article reconstitutes elementary properties of the Pearson correlation to successively derive a linear generalization to multiple dimensions and thereupon a nonlinear generalization to principal manifolds, given by the Riemann-Pearson Correlation.
\\\\
\textbf {Keywords:} Nonlinear Correlation, Riemann-Pearson Correlation, Principal Manifold
\end{adjustwidth}
\vspace{0.5cm}
\end{abstract}

\end{@twocolumnfalse}
] 

\section{Introduction}

A fundamental issue, that accompanies the analysis of multivariate
data, concerns the quantification of statistically dependency structures
by association measures. Many approaches in this direction can be
traced back to the late 19\textsuperscript{th} century, where the
issue was closely related to the task, to extract laws of nature from
two dimensional scatter plots. This in particular applies to the widespread
Pearson correlation coefficient.
\begin{defn*}[Pearson Correlation]
\label{def:Pearson Correlation}\emph{Let $X\colon\Omega\to\mathbb{R}$
and $Y\colon\Omega\to\mathbb{R}$ be random variables with finite
variances $\sigma_{X}^{2}$ and $\sigma_{Y}^{2}$. Then the Pearson
correlation $\rho_{X,Y}$ is defined by:
\begin{equation}
\rho_{X,Y}\coloneqq\frac{\mathrm{Cov}(X,\,Y)}{\sigma_{X}\sigma_{Y}}\label{eq:Pearson Correlation}
\end{equation}
}
\end{defn*}
Due to its popularity and simplicity the Pearson correlation has been
generalized to a variety of different domains of application, including
generic monotonous relationships, relationships between sets of random
variables and asymmetric relationships (\nocite{Zheng2010}Zheng et
al. 2010). In the purpose to provide a generalization to smooth curves
and submanifolds, that allow an incorporation of structural assumptions,
some elementary considerations have to be taken into account, that
allow a separation between the pairwise quantification of dependencies
and their global modelling.

\section{Correlation and Regression Dilution}

Pearson's original motivation, was the regression of
a straight line, that minimizes the averaged Euclidean distance to
points, that are scattered about it (\nocite{Pearson1901}Pearson
1901, p561). Thereby his investigations were preceded by the observation,
that for a measurement series the assumed ``direction of causality''
influences the estimate of the slope of the regression line. Thereby
the direction of causality is implicated by the choice of an error
model, that assumes one random variable to be error free and the other
to account for the whole observed error. Pearson empirically observed,
that for $n\in\mathbb{N}$ points, given by i.i.d. realizations $\boldsymbol{x}\in\mathbb{R}^{n}$
of $X$ and $\boldsymbol{y}\in\mathbb{R}^{n}$ of $Y$, the\textbf{
least squares} regression line of $\boldsymbol{y}$ on $\boldsymbol{x}$
only equals the regression line of $\boldsymbol{x}$ on $\boldsymbol{y}$,
if all points perfectly fit on a straight line. In all other cases,
however, the slopes of the respective regression lines turned out,
not to be reciprocal and their product was found within the interval
$[0,\,1)$. This observation was decisive for Pearson's
definition of the correlation coefficient. Thereby $\rho_{X,Y}$ is
estimated by its empirical counterpart $\rho_{x,y}$, that replaces
variances by sample variances and the covariance by the sample variance.
\begin{lem}
\label{lem:Correlation Lin. Regr. Slopes}Let $X\colon\Omega\to\mathbb{R}$
and $Y\colon\Omega\to\mathbb{R}$ be random variables with $n\in\mathbb{N}$
i.i.d. realizations $\boldsymbol{x}\in\mathbb{R}^{n}$ and $\boldsymbol{y}\in\mathbb{R}^{n}$.
Furthermore let $\beta_{x}\in\mathbb{R}$ denote the slope of the
linear regression of $\boldsymbol{y}$ on $\boldsymbol{x}$ and $\beta_{y}\in\mathbb{R}$
the slope of the linear regression of $\boldsymbol{x}$ on $\boldsymbol{y}$.
Then:
\begin{equation}
\rho_{x,y}^{2}=\beta_{x}\beta_{y}\label{eq:Correlation Lin. Regr. Slopes}
\end{equation}
\end{lem}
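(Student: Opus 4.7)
The plan is to reduce both regression slopes to the standard ordinary-least-squares closed-form expressions in terms of sample variances and the sample covariance, and then observe that their product is exactly the squared empirical Pearson coefficient. No deep machinery is needed; the whole statement is essentially a rewriting exercise, so the task is to organise the computation cleanly rather than overcome any real obstacle.

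First I would fix notation: write $\bar{x}, \bar{y}$ for the sample means, $s_x^2 = \tfrac{1}{n}\sum (x_i-\bar{x})^2$, $s_y^2 = \tfrac{1}{n}\sum (y_i-\bar{y})^2$ for the sample variances, and $s_{xy} = \tfrac{1}{n}\sum (x_i-\bar{x})(y_i-\bar{y})$ for the sample covariance, so that by the definition preceding the lemma one has $\rho_{x,y} = s_{xy}/(s_x s_y)$. The normalisation $1/n$ is irrelevant since it cancels throughout.

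Next I would derive $\beta_x$. Regressing $\boldsymbol{y}$ on $\boldsymbol{x}$ means minimising $\sum_{i=1}^{n}(y_i - a - \beta x_i)^2$ over $(a,\beta)\in\mathbb{R}^2$. Setting the partial derivatives to zero yields the normal equations, whose solution is the well-known $\beta_x = s_{xy}/s_x^2$. By symmetry, swapping the roles of $\boldsymbol{x}$ and $\boldsymbol{y}$ in exactly the same computation gives $\beta_y = s_{xy}/s_y^2$. It is worth noting here that $s_x^2, s_y^2 > 0$ is implicitly required for the slopes to be defined; the degenerate case where either sample is constant must be excluded (or handled as a convention).

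Finally I would multiply the two expressions and obtain
\begin{equation}
\beta_x \beta_y \;=\; \frac{s_{xy}}{s_x^2}\cdot\frac{s_{xy}}{s_y^2} \;=\; \frac{s_{xy}^{\,2}}{s_x^2\, s_y^2} \;=\; \rho_{x,y}^{\,2},
\end{equation}
which is the claim. The only substantive step is the derivation of the OLS slope from the normal equations; everything else is algebraic identification. As a side benefit, this computation also makes Pearson's empirical observation transparent: since $\rho_{x,y}^2\in[0,1]$ with equality to $1$ iff the points are collinear, the product $\beta_x\beta_y$ automatically lies in $[0,1]$ and equals $1$ precisely in the perfect-fit case, so that $\beta_y = 1/\beta_x$ only in that situation.
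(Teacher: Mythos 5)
Your proposal is correct and follows essentially the same route as the paper: derive the OLS normal equations, extract $\beta_x = s_{xy}/s_x^2$ and $\beta_y = s_{xy}/s_y^2$, and multiply to recover $\rho_{x,y}^2$. Your remark that $s_x^2, s_y^2 > 0$ must be assumed for the slopes to exist is a small but worthwhile point the paper leaves implicit.
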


\begin{proof}[Proof of Lemma \ref{lem:Correlation Lin. Regr. Slopes}]
The following proof is based on (\nocite{Kenney1962}Kenney et al.
1962). The least squares regression of $\boldsymbol{y}$ on $\boldsymbol{x}$
implicates, that for regression coefficients $\alpha_{x},\beta_{x}\in\mathbb{R}$
and a normal distributed random error$\varepsilon\coloneqq Y-(\beta_{x}X+\alpha_{x})$
the log-likelihood of the realizations is maximized, if and only if
the $\ell^{2}$-norm of the realizations of $\varepsilon$ is minimized,
such that:
\begin{equation}
{\displaystyle SSE_{y}(\alpha_{x},\,\beta_{x})\coloneqq\sum_{i=1}^{n}(y_{i}-(\beta_{x}x_{i}+\alpha_{x}))^{2}}\to\min\label{eq:Correlation Lin. Regr. Slopes 1}
\end{equation}
Since $SSE_{y}$ is a quadratic function of $\alpha_{x}$ and $\beta_{x}$
and therefore convex, it has a unique global minimum at:
\begin{align}
\frac{\partial}{\partial\alpha_{x}}SSE_{y} & =2\sum_{i=1}^{n}(y_{i}-(\beta_{x}x_{i}+\alpha_{x}))(-x_{i})=0\label{eq:Correlation Lin. Regr. Slopes 2}\\
\frac{\partial}{\partial\beta_{x}}SSE_{y} & =2\sum_{i=1}^{n}(y_{i}-(\beta_{x}x_{i}+\alpha_{x}))(-1)=0\label{eq:Correlation Lin. Regr. Slopes 3}
\end{align}
By equating the coefficients, equations \ref{eq:Correlation Lin. Regr. Slopes 2}
and \ref{eq:Correlation Lin. Regr. Slopes 3} can be rewritten as
a system of linear equations of $\alpha_{x}$ and $\beta_{x}$:
\begin{align}
\alpha_{x}n+\beta_{x}\sum_{i=1}^{n}x_{i} & =\sum_{i=1}^{n}y_{i}\label{eq:Correlation Lin. Regr. Slopes 4}\\
\alpha_{x}\sum_{i=1}^{n}x_{i}+\beta_{x}\sum_{i=1}^{n}x_{i}^{2} & =\sum_{i=1}^{n}x_{i}y_{i}\label{eq:Correlation Lin. Regr. Slopes 5}
\end{align}
Consequently in matrix notation the vector $(\alpha_{x},\,\beta_{x})^{\mathrm{T}}$
is determined by:
\begin{equation}
\left(\begin{array}{c}
\alpha_{x}\\
\beta_{x}
\end{array}\right)=\left(\begin{array}{cc}
n & \sum_{i=1}^{n}x_{i}\\
\sum_{i=1}^{n}x_{i} & \sum_{i=1}^{n}x_{i}^{2}
\end{array}\right)^{-1}\left(\begin{array}{c}
\sum_{i=1}^{n}y_{i}\\
\sum_{i=1}^{n}x_{i}y_{i}
\end{array}\right)\label{eq:Correlation Lin. Regr. Slopes 6}
\end{equation}
Let $\overline{x},\,\overline{y}$ respectively denote the \emph{sample
means}. Then by calculating the matrix inverse, the slope $\beta_{x}$
equates to:
\begin{equation}
\beta_{x}=\left(\sum_{i=1}^{n}x_{i}y_{i}-n\overline{x}\overline{y}\right)\left(\sum_{i=1}^{n}x_{i}^{2}-n\overline{x}^{2}\right)^{-1}\label{eq:Correlation Lin. Regr. Slopes 7}
\end{equation}
Thereupon by substituting the \emph{sample variance}:
\begin{align}
\sigma_{x}^{2} & \coloneqq\frac{1}{n}\sum_{i=1}^{n}(x_{i}-\overline{x})^{2}\nonumber \\
 & =\frac{1}{n}\left(\sum_{i=1}^{n}x_{i}^{2}-n\overline{x}^{2}\right)\label{eq:Correlation Lin. Regr. Slopes 9}
\end{align}
And the \emph{sample covariance}:
\begin{align}
\mathrm{Cov}(\boldsymbol{x},\,\boldsymbol{y}) & \coloneqq\frac{1}{n}\sum_{i=1}^{n}(x_{i}-\overline{x})(y_{i}-\overline{y})\nonumber \\
 & =\frac{1}{n}\left(\sum_{i=1}^{n}x_{i}y_{i}-n\overline{x}\overline{y}\right)\label{eq:Correlation Lin. Regr. Slopes 8}
\end{align}
It follows from equation \ref{eq:Correlation Lin. Regr. Slopes 7},
that:
\begin{equation}
\beta_{x}=\frac{\mathrm{Cov}(\boldsymbol{x},\,\boldsymbol{y})}{\sigma_{x}^{2}}\label{eq:Correlation Lin. Regr. Slopes 10}
\end{equation}
Conversely the slope $\beta_{y}$ of the linear regression of $\boldsymbol{x}$
on $\boldsymbol{y}$ mutatis mutandis equates to: 
\begin{equation}
\beta_{y}=\frac{\mathrm{Cov}(\boldsymbol{y},\,\boldsymbol{x})}{\sigma_{y}^{2}}\label{eq:Correlation Lin. Regr. Slopes 11}
\end{equation}
By the symmetry of $\mathrm{Cov}$ it the follows, from equations
\ref{eq:Correlation Lin. Regr. Slopes 10} and \ref{eq:Correlation Lin. Regr. Slopes 11}
that:
\begin{equation}
\beta_{x}\beta_{y}=\frac{\mathrm{Cov}(\boldsymbol{x},\,\boldsymbol{y})^{2}}{\sigma_{x}^{2}\sigma_{y}^{2}}=\rho_{xy}^{2}\label{eq:Correlation Lin. Regr. Slopes 12}
\end{equation}
\end{proof}
Lemma \ref{lem:Correlation Lin. Regr. Slopes} shows, that $\rho_{x,y}$
may be regarded as the geometric mean of the regression slopes $\beta_{x}$
and $\beta_{y}$, where $\beta_{x}$ and $\beta_{y}$ respectively
describe the causal relationships $X\to Y$ and $Y\to X$. Thereby
$X$ and $Y$ respectively are treated as error free regressor variables
to predict the corresponding response variable, that captures the
overall error. The mutual linear relationship $X\leftrightarrow Y$
is then described by a regression line, that equally treats errors
in both variables. As an immediate consequence of this symmetry it
follows, that this \textbf{total least squares} regression line is
unique, and its slope $\beta_{x}^{\star}$, that describes $\boldsymbol{y}$
by $\boldsymbol{x}$ is reciprocal to the slope $\beta_{y}^{\star}$,
that describes $\boldsymbol{x}$ by $\boldsymbol{y}$ such that $\beta_{x}^{\star}\beta_{y}^{\star}=1$.
In this sense $\beta_{x}$ and $\beta_{y}$ may be regarded as biased
estimations of $\beta_{x}^{\star}$ and $\beta_{y}^{\star}$. Thereby
the bias generally is known as ``regression dilution'' or ``regression
attenuation''. For the case that both errors are independent and
normal distributed, this bias can be corrected by a prefactor, that
incorporates the error of the respective regressor variable. An application
of this correction to lemma \ref{lem:Correlation Lin. Regr. Slopes}
then shows, that $\rho_{X,Y}$ has a consistent estimations by the
sample variances of $\boldsymbol{x}$ and $\boldsymbol{y}$ and the
variances of their respective errors $\varepsilon_{X}$ and $\varepsilon_{Y}$.
\begin{prop}
\label{prop:Correlation Reliability}Let $X\colon\Omega\to\mathbb{R}$
and $Y\colon\Omega\to\mathbb{R}$ be random variables with $n\in\mathbb{N}$
i.i.d. realizations $\boldsymbol{x}\in\mathbb{R}^{n}$ and $\boldsymbol{y}\in\mathbb{R}^{n}$
and random errors $\varepsilon_{X}\sim\mathcal{N}(0,\,\eta_{X}^{2})$
and $\varepsilon_{Y}\sim\mathcal{N}(0,\,\eta_{Y}^{2})$. Then:
\begin{equation}
\rho_{x,y}^{2}\overset{{\scriptstyle P}}{\to}\left(1-\frac{\eta_{X}^{2}}{\sigma_{x}^{2}}\right)\left(1-\frac{\eta_{Y}^{2}}{\sigma_{y}^{2}}\right),\:\text{for }n\to\infty\label{eq:Correlation Reliability}
\end{equation}
\end{prop}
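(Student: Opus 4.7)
The plan is to combine Lemma \ref{lem:Correlation Lin. Regr. Slopes} with a classical attenuation-correction argument, invoking the weak law of large numbers to pass from sample to population quantities. First I would unpack the measurement-error model implicit in the setup: the observed realizations $\boldsymbol x, \boldsymbol y$ are to be regarded as noisy copies of latent values $\tilde x_i, \tilde y_i$ that lie on the unique total least squares regression line, i.e.\ $x_i = \tilde x_i + \varepsilon_{X,i}$ and $y_i = \tilde y_i + \varepsilon_{Y,i}$, with the errors independent of one another and of the latent coordinates. This is precisely the setting motivating the proposition; in terms of the TLS slopes it translates into $\beta_x^{\star}\beta_y^{\star}=1$, as argued in the paragraph preceding the statement, and at the population level into $\mathrm{Cov}(X,Y)=\mathrm{Cov}(\tilde X,\tilde Y)$ and $\sigma_X^{2}=\tilde\sigma_X^{2}+\eta_X^{2}$ (analogously for $Y$).

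Second, I would apply Lemma \ref{lem:Correlation Lin. Regr. Slopes} to rewrite $\rho_{x,y}^{2}=\beta_x\beta_y$ in terms of the naive OLS slopes. By the weak law of large numbers the sample variances and the sample covariance converge in probability to their population counterparts, so that the continuous mapping theorem applied to equation \ref{eq:Correlation Lin. Regr. Slopes 10} yields
\begin{equation*}
\beta_x \overset{P}{\to} \frac{\mathrm{Cov}(\tilde X,\tilde Y)}{\tilde\sigma_X^{2}+\eta_X^{2}} = \beta_x^{\star}\cdot\frac{\tilde\sigma_X^{2}}{\tilde\sigma_X^{2}+\eta_X^{2}},
\end{equation*}
together with the analogous statement for $\beta_y$. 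The shrinkage factor $\tilde\sigma_X^{2}/(\tilde\sigma_X^{2}+\eta_X^{2})$ is exactly the attenuation announced before the proposition, and it coincides with the probability limit of $1-\eta_X^{2}/\sigma_x^{2}$.

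Finally I would combine the two limits, exploit $\beta_x^{\star}\beta_y^{\star}=1$ to cancel the latent slopes, and apply the continuous mapping theorem once more (valid because $\sigma_X^{2},\sigma_Y^{2}>0$ and the constants $\eta_X^{2},\eta_Y^{2}$ are fixed) in order to re-express the limits through the sample variances that appear on the right-hand side of \ref{eq:Correlation Reliability}. The main obstacle is conceptual rather than computational: one must pin down the implicit data-generating model carefully enough that $\beta_x^{\star}\beta_y^{\star}=1$ genuinely holds, for only under this perfect-latent-relationship hypothesis does the bare attenuation product appear on the right-hand side; once the model is in place, the remainder is a routine assembly of Lemma \ref{lem:Correlation Lin. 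Regr. Slopes}, the weak law of large numbers and the continuous mapping theorem, with the Gaussianity of the errors entering only through Lemma \ref{lem:Correlation Lin. Regr. Slopes} itself.
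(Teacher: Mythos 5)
Your proposal is correct and follows essentially the same route as the paper: rewrite $\rho_{x,y}^{2}=\beta_{x}\beta_{y}$ via Lemma \ref{lem:Correlation Lin. Regr. Slopes}, attenuate each OLS slope by the factor $\sigma_{x^{*}}^{2}/(\sigma_{x^{*}}^{2}+\eta_{X}^{2})=1-\eta_{X}^{2}/\sigma_{x}^{2}$ using the variance decomposition $\sigma_{x}^{2}=\sigma_{x^{*}}^{2}+\eta_{X}^{2}$, and cancel the latent slopes via $\beta_{x}^{\star}\beta_{y}^{\star}=1$. The only cosmetic difference is that you derive the attenuation limit directly from the weak law of large numbers and the continuous mapping theorem, whereas the paper cites it from Snedecor et al.; your explicit remark that the whole argument hinges on the perfect latent linear relationship forcing $\beta_{x}^{\star}\beta_{y}^{\star}=1$ is exactly the assumption the paper invokes as ``uniqueness of the total least squares regression line.''
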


\begin{proof}[Proof of Proposition \ref{prop:Correlation Reliability}]
Let $\beta_{x}\in\mathbb{R}$ be the slope of the ordinary least
squares (OLS) regression line of $\boldsymbol{y}$ on $\boldsymbol{x}$,
where $\boldsymbol{x}$ is assumed to realize $X$ with a normal distributed
random error $\varepsilon_{X}\sim\mathcal{N}(0,\,\eta_{X}^{2})$.
Then $X$ decomposes into (i) an unobserved error free regressor variable
$X^{\star}$ and (ii) the random error $\varepsilon_{X}$, such that:
\begin{equation}
X\sim X^{\star}+\varepsilon_{X}\label{eq:Correlation Reliability, proof 1}
\end{equation}
With respect to this decomposition, the slope $\beta_{x}^{\star}$
of the total least squares (TLS) regression line, that also considers
$\varepsilon_{X}$, then is identified by the slope of the OLS regression
of $\boldsymbol{y}$ on $\boldsymbol{x}^{\star}$, where $\boldsymbol{x}^{\star}$
realizes $X^{\star}$. Thereupon let $\sigma_{x^{*}}^{2}$ be the
empirical variance of $\boldsymbol{x}^{\star}$, then according to
(\nocite{Snedecor1967}Snedecor et al. 1967) it follows, that:
\begin{equation}
\beta_{x}\overset{{\scriptstyle P}}{\to}\frac{\sigma_{x^{*}}^{2}}{\sigma_{x^{*}}^{2}+\eta_{X}^{2}}\beta_{x}^{\star},\:\text{for }n\to\infty\label{eq:Correlation Reliability, proof 2}
\end{equation}
Since furthermore $\varepsilon_{X}$ by definition is statistically
independent from $X^{\star}$, it can be concluded, that:
\begin{align}
\sigma_{x}^{2} & =\mathrm{Var}(X^{\star}+\varepsilon_{X})\nonumber \\
 & =\mathrm{Var}(X^{\star})+\mathrm{Var}(\varepsilon_{X})=\sigma_{x^{*}}^{2}+\eta_{X}^{2}\label{eq:Correlation Reliability, proof 3}
\end{align}
Such that:
\begin{equation}
\frac{\sigma_{x^{*}}^{2}}{\sigma_{x^{*}}^{2}+\eta_{X}^{2}}\stackrel{{\scriptstyle \text{\ref{eq:Correlation Reliability, proof 3}}}}{=}\frac{\sigma_{x}^{2}-\eta_{X}^{2}}{\sigma_{x}^{2}}=1-\frac{\eta_{X}^{2}}{\sigma_{x}^{2}}\label{eq:Correlation Reliability, proof 4}
\end{equation}
And therefore by equation \ref{eq:Correlation Reliability, proof 2}
that:
\begin{equation}
\beta_{x}\overset{{\scriptstyle P}}{\to}\left(1-\frac{\eta_{X}^{2}}{\sigma_{x}^{2}}\right)\beta_{x}^{\star},\:\text{for }n\to\infty\label{eq:Correlation Reliability, proof 5}
\end{equation}
Conversely let now $\beta_{y}\in\mathbb{R}$ be the the slope of the
OLS regression of $\boldsymbol{x}$ on $\boldsymbol{y}$, where $\boldsymbol{y}$
is assumed to realize $Y$ with a random error $\varepsilon_{X}\sim\mathcal{N}(0,\,\eta_{Y}^{2})$.
Then also the corrected slope $\beta_{y}^{\star}$ mutatis mutandis
satisfies the relation given by equation \ref{eq:Correlation Reliability, proof 5}
and by the representation of $\rho_{x,y}$, as given by lemma \ref{lem:Correlation Lin. Regr. Slopes},
it then can be concluded, that:

\begin{align}
\rho_{x,y}^{2} & \stackrel{{\scriptstyle \text{\ref{lem:Correlation Lin. Regr. Slopes}}}}{=}\beta_{x}\beta_{x}\label{eq:Correlation Reliability, proof 6}\\
 & \overset{{\scriptstyle P}}{\to}\left(1-\frac{\eta_{X}^{2}}{\sigma_{x}^{2}}\right)\left(1-\frac{\eta_{Y}^{2}}{\sigma_{y}^{2}}\right)\beta_{x}^{\star}\beta_{y}^{\star},\:\text{for }n\to\infty\nonumber 
\end{align}
The proposition then follows by the uniqueness of the total least
squares regression line for known variances $\eta_{X}^{2}$ and $\eta_{Y}^{2}$,
such that:
\[
\beta_{y}^{\star}=\frac{1}{\beta_{x}^{\star}}
\]
\end{proof}

\section{A Generalization to linear Principal Manifolds}

Within the same publication, in which Pearson introduced the correlation
coefficient, he also developed a structured approach that determines
the straight line, that minimizes the Euclidean distance (\nocite{Pearson1901}Pearson
1901, p563). His method, which later received attribution as the method
of \textbf{Principal Component Analysis} (\textbf{PCA}), however,
even went further and allowed a canonical generalization of the problem
in the following sense: For $d\in\mathbb{N}$ let $\boldsymbol{X}\colon\Omega\to\mathbb{R}^{d}$
be a multivariate random vector and for $n\in\mathbb{N}$ let $\boldsymbol{x}\in\mathbb{R}^{n\times d}$
be an i.i.d. realization of $\boldsymbol{X}$. Then for any given
$k\in\mathbb{N}$ with $k\leq d$ the goal is, to determine an affine
linear subspace $L\subseteq\mathbb{R}^{d}$ of dimension $k$, that
minimizes the summed Euclidean distance to $\boldsymbol{x}$. In order
to solve this problem, the fundamental idea of Pearson was, to transfer
the principal axis theorem from ellipsoids to multivariate Gaussian
distributed random vectors. Thereupon, however, the method also can
be formulated with respect to generic elliptical distributions.
\begin{defn*}[Elliptical Distribution]
\label{def:Elliptical Distribution}\emph{For $d\in\mathbb{N}$ let
$\boldsymbol{X}\colon\Omega\to\mathbb{R}^{d}$ be a random vector.
Then $\boldsymbol{X}$ is elliptically distributed, iff there exists
a random vector $\boldsymbol{S}\colon\Omega\to\mathbb{R}^{k}$ with
$k\leq d$, which distribution is invariant to rotations, a matrix
$A\in\mathbb{R}^{d\times k}$ of rank $k$ and a vector $\boldsymbol{b}\in\mathbb{R}^{d}$,
such that:}
\begin{equation}
\boldsymbol{X}\sim A\boldsymbol{S}+\boldsymbol{b}\label{eq:Elliptical Distribution}
\end{equation}
\end{defn*}
Consequently a random vector $\boldsymbol{X}$ is elliptically distributed,
if it can be represented by an affine transformation of a radial symmetric
distributed random vector $\boldsymbol{S}$. The decisive property,
that underpins the choice of elliptical distributions, lies within
their coincidence of linear and statistical dependencies, which allows
to decompose $\boldsymbol{X}$ in statistically independent components
by a linear decomposition. This property allows, to substantiate the
multidimensional ``linear fitting problem'' with respect to an orthogonal
projection.
\begin{prop}
\label{prop:Affine fitting of Elliptic distributions}For $d\in\mathbb{N}$
let $\boldsymbol{X}\colon\Omega\to\mathbb{R}^{d}$ be an elliptically
distributed random vector, $L\subseteq\mathbb{R}^{d}$ an affine linear
subspace of $\mathbb{R}^{d}$ and $\pi_{L}$ the orthogonal projection
of $\mathbb{R}^{d}$ onto $L$. Then the following statements are
equivalent:

\emph{(i)} $L$ minimizes the Euclidean distance to $\boldsymbol{X}$

\emph{(ii)} $\mathbb{E}(\boldsymbol{X})\in L$ and $L$ maximizes
the variance $\mathrm{Var}(\pi_{L}(\boldsymbol{X}))$ 
\end{prop}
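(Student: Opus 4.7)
The plan is to prove both implications simultaneously by rewriting the expected squared Euclidean distance
\[
D(L)\coloneqq\mathbb{E}\,\|\boldsymbol{X}-\pi_{L}(\boldsymbol{X})\|^{2}
\]
as a constant minus the projected variance, via two successive applications of the Pythagorean identity. I would parametrise any admissible $L$ as $L=\boldsymbol{p}+V$, where $\boldsymbol{p}\in L$ and $V$ is the parallel $k$-dimensional linear subspace, so that $\pi_{L}(\boldsymbol{x})=\boldsymbol{p}+\pi_{V}(\boldsymbol{x}-\boldsymbol{p})$ and the residual lies in $V^{\perp}$, yielding $D(L)=\mathbb{E}\,\|\pi_{V^{\perp}}(\boldsymbol{X}-\boldsymbol{p})\|^{2}$.

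First, I would isolate the affine offset by the splitting $\boldsymbol{X}-\boldsymbol{p}=(\boldsymbol{X}-\mathbb{E}\boldsymbol{X})+(\mathbb{E}\boldsymbol{X}-\boldsymbol{p})$. Linearity of $\pi_{V^{\perp}}$ together with $\mathbb{E}(\boldsymbol{X}-\mathbb{E}\boldsymbol{X})=0$ kills the cross term in the expanded square, giving
\[
D(L)=\mathbb{E}\,\|\pi_{V^{\perp}}(\boldsymbol{X}-\mathbb{E}\boldsymbol{X})\|^{2}+\|\pi_{V^{\perp}}(\mathbb{E}\boldsymbol{X}-\boldsymbol{p})\|^{2}.
\]
The second summand is a nonnegative bias that depends only on $\boldsymbol{p}$ and vanishes exactly when $\mathbb{E}\boldsymbol{X}-\boldsymbol{p}\in V$, i.e.\ when $\mathbb{E}\boldsymbol{X}\in L$. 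Hence both (i) and the first half of (ii) force the affine subspace to pass through $\mathbb{E}\boldsymbol{X}$, and I may henceforth restrict attention to those $L$.

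Second, under $\mathbb{E}\boldsymbol{X}\in L$, applying Pythagoras to the centred vector with respect to the orthogonal splitting $\mathbb{R}^{d}=V\oplus V^{\perp}$ and taking expectations gives
\[
\mathbb{E}\,\|\pi_{V^{\perp}}(\boldsymbol{X}-\mathbb{E}\boldsymbol{X})\|^{2}=\mathrm{tr}(\mathrm{Cov}(\boldsymbol{X}))-\mathbb{E}\,\|\pi_{V}(\boldsymbol{X}-\mathbb{E}\boldsymbol{X})\|^{2}.
\]
Since $\pi_{L}$ is affine, $\mathbb{E}\pi_{L}(\boldsymbol{X})=\pi_{L}(\mathbb{E}\boldsymbol{X})=\mathbb{E}\boldsymbol{X}$, so the subtracted term is exactly $\mathrm{Var}(\pi_{L}(\boldsymbol{X}))$. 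The first term on the right is independent of $L$, so $D(L)=\mathrm{tr}(\mathrm{Cov}(\boldsymbol{X}))-\mathrm{Var}(\pi_{L}(\boldsymbol{X}))$, and the equivalence (i)\,$\Leftrightarrow$\,(ii) follows at once.

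The delicate point I expect to have to address is the role of the elliptical hypothesis, since the algebraic identity above only uses finite second moments of $\boldsymbol{X}$. Ellipticity is what guarantees that the variance-maximising $V$ is canonically spanned by the top-$k$ eigenvectors of $\mathrm{Cov}(\boldsymbol{X})$ and that the corresponding coordinates of $\boldsymbol{X}$ are not merely uncorrelated but statistically independent, so that the proposition cleanly bridges the geometric distance-minimisation problem with the principal axis theorem invoked in the motivation. I would flag this dependency explicitly rather than let it hide in the phrase \emph{minimises the Euclidean distance}.
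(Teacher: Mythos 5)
Your proof is correct and takes essentially the same route as the paper: both reduce the expected squared distance to the identity $D(L)=\mathrm{Var}(\boldsymbol{X})-\mathrm{Var}(\pi_{L}(\boldsymbol{X}))+\left\Vert \pi_{V^{\perp}}(\mathbb{E}\boldsymbol{X}-\boldsymbol{p})\right\Vert ^{2}$, where the bias term vanishes exactly when $\mathbb{E}\boldsymbol{X}\in L$ and the projected variance depends only on the direction space, so the two optimizations decouple. Your closing remark is also accurate: the decomposition needs only finite second moments, and the ellipticity hypothesis (which the paper invokes to upgrade uncorrelatedness to independence) plays no essential role in this step --- you make that decoupling and that caveat more explicit than the paper does, which is a small improvement rather than a different argument.
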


\begin{proof}
Let $\boldsymbol{Y}_{L}\coloneqq\boldsymbol{X}-\pi_{L}(\boldsymbol{X})$,
then the Euclidean distance between $\boldsymbol{X}$ and $L$ can
be written:

\begin{align}
d(\boldsymbol{X},\,\pi_{L}(\boldsymbol{X}))^{2} & =\mathbb{E}(\|(\boldsymbol{X}-\pi_{L}(\boldsymbol{X}))\|_{2}^{2})\label{eq:L1.1.1}\\
 & =\mathbb{E}(\boldsymbol{Y}_{L}^{2})\nonumber 
\end{align}
This representation can furthermore be decomposed by using the algebraic
formula for the variance:
\begin{equation}
\mathbb{E}(\boldsymbol{Y}_{L}^{2})=\mathrm{Var}(\boldsymbol{Y}_{L})+\mathbb{E}(\boldsymbol{Y}_{L})^{2}\label{eq:L1.1.2}
\end{equation}
Let now be $\boldsymbol{Y}_{L}^{\perp}\coloneqq\pi_{L}(\boldsymbol{X})$,
then $\boldsymbol{X}=\boldsymbol{Y}_{L}+\boldsymbol{Y}_{L}^{\perp}$
and $\boldsymbol{Y}_{L}$ and $\boldsymbol{Y}_{L}^{\perp}$ are uncorrelated,
such that: 
\begin{align}
\mathrm{Var}(\boldsymbol{X}) & =\mathrm{Var}(\boldsymbol{Y}_{L}+\boldsymbol{Y}_{L}^{\perp})\label{eq:L1.1.3}\\
 & =\mathrm{Var}(\boldsymbol{Y}_{L})+\mathrm{Var}(\boldsymbol{Y}_{L}^{\perp})\nonumber 
\end{align}
From equations \ref{eq:L1.1.1}, \ref{eq:L1.1.2} and \ref{eq:L1.1.3}
it follows, that:

\begin{equation}
d(\boldsymbol{X},\,\pi_{L}(\boldsymbol{X}))^{2}=\mathrm{Var}(\boldsymbol{X})-\mathrm{Var}(\boldsymbol{Y}_{L}^{\perp})+\mathbb{E}(\boldsymbol{Y}_{L})^{2}\label{eq:L1.1.4}
\end{equation}
Consequentially the Euclidean distance is minimized, if and only if
the right side of equation \ref{eq:L1.1.4} is minimized. The first
term $\mathrm{Var}(\boldsymbol{X})$, however, does not depend on
$L$ and since $\boldsymbol{X}$ is elliptically distributed, the
linear independence of $\boldsymbol{Y}_{L}^{\perp}$ and $\boldsymbol{Y}_{L}$
is sufficient for statistically independence. It follows, that the
Euclidean distance is minimized, if and only if: (1) The term $\mathbb{E}(\boldsymbol{Y}_{L})^{2}$
is minimized and (2) the term $\mathrm{Var}(\boldsymbol{Y}_{L}^{\perp})$
is maximized. Concerning (1) it follows, that:

\begin{align*}
\mathbb{E}(\boldsymbol{Y}_{L})^{2} & =\mathbb{E}(\boldsymbol{X}-\pi_{L}(\boldsymbol{X}))^{2}\\
 & =(\mathbb{E}(\boldsymbol{X})-\pi_{L}(\mathbb{E}(\boldsymbol{X})))^{2}
\end{align*}
Therefore the term $\mathbb{E}(\boldsymbol{Y}_{L})^{2}$ is minimized,
if and only if $\pi_{L}(\mathbb{E}(\boldsymbol{X}))=\mathbb{E}(\boldsymbol{X})$,
which in turn means that $\mathbb{E}(\boldsymbol{X})\in L$. Concerning
(2), the proposition immediately follows by the definition of $\boldsymbol{Y}_{L}^{\perp}$.
\end{proof}
Let now be $k\leq d$. In order to derive an affine linear subspace
$L\subseteq\mathbb{R}^{d}$ that minimizes the Euclidean distance
to $\boldsymbol{X}$, proposition ... states, that is suffices to
provide an $L$ which (1) is centred in $\boldsymbol{X}$, such that
$\mathbb{E}(\boldsymbol{X})\in L$, and (2) maximizes the variance
of the projection. In order to maximize $\mathrm{Var}(\pi_{L}(\boldsymbol{X}))$,
however, it is beneficial to give a further representation.
\begin{lem}
\label{lem:Variance of Projection}For $d\in\mathbb{N}$ let $\boldsymbol{X}\colon\Omega\to\mathbb{R}^{d}$
be an elliptically distributed random vector, $L\subseteq\mathbb{R}^{d}$
an affine linear subspace of $\mathbb{R}^{d}$, which for an $k\leq d$,
a vector $\boldsymbol{v}\in\mathbb{R}^{d}$ and an orthonormal basis
$\boldsymbol{u}_{1},\,\ldots\,,\,\boldsymbol{u}{}_{k}\in\mathbb{R}^{d}$
is given by:
\[
L=\boldsymbol{v}+\bigoplus_{i=1}^{k}\mathbb{R}\boldsymbol{u}_{i}
\]
Let further be $\pi_{L}\colon\mathbb{R}^{d}\to L$ the orthogonal
projection of $\mathbb{R}^{d}$ onto $L$. Then the variance of the
projection is given by:
\[
\mathrm{Var}(\pi_{L}(\boldsymbol{X}))=\sum_{i=1}^{k}\boldsymbol{u}_{i}^{T}\mathrm{Cov}(\boldsymbol{X})\boldsymbol{u}_{i}
\]
\end{lem}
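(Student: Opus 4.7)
The plan is to expand $\pi_{L}$ in the given orthonormal basis, use translation-invariance of $\mathrm{Var}$ to discard the affine shift, and then invoke the Pythagorean identity to collapse the resulting sum of orthogonal vectors into a sum of scalar variances along the basis directions. First I would write the standard formula for the orthogonal projection onto an affine subspace with orthonormal direction basis,
\[
\pi_{L}(\boldsymbol{X}) = \boldsymbol{v} + \sum_{i=1}^{k}\boldsymbol{u}_{i}\boldsymbol{u}_{i}^{T}(\boldsymbol{X}-\boldsymbol{v}),
\]
and observe that, since additive constants drop out of $\mathrm{Var}$, the task reduces to computing $\mathrm{Var}\bigl(\sum_{i=1}^{k}\boldsymbol{u}_{i}(\boldsymbol{u}_{i}^{T}\boldsymbol{X})\bigr)$.

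Consistent with its usage in the proof of Proposition \ref{prop:Affine fitting of Elliptic distributions}, I would interpret $\mathrm{Var}$ applied to a vector as the scalar quantity $\mathbb{E}\|\cdot-\mathbb{E}(\cdot)\|_{2}^{2}$, equivalently the trace of the covariance matrix. Exploiting orthonormality of the $\boldsymbol{u}_{i}$, the Pythagorean identity gives $\|\sum_{i}\alpha_{i}\boldsymbol{u}_{i}\|_{2}^{2}=\sum_{i}\alpha_{i}^{2}$ for arbitrary scalars $\alpha_{i}$, so centring $\boldsymbol{X}$ inside the norm and taking expectations yields
\[
\mathrm{Var}(\pi_{L}(\boldsymbol{X})) = \mathbb{E}\Bigl\|\sum_{i=1}^{k}\boldsymbol{u}_{i}\,\boldsymbol{u}_{i}^{T}(\boldsymbol{X}-\mathbb{E}\boldsymbol{X})\Bigr\|_{2}^{2} = \sum_{i=1}^{k}\mathrm{Var}(\boldsymbol{u}_{i}^{T}\boldsymbol{X}).
\]
Crucially, the cross terms vanish by orthogonality of the basis, not by any independence of the projections along the $\boldsymbol{u}_{i}$.

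The remaining step is the standard scalar identity $\mathrm{Var}(\boldsymbol{u}_{i}^{T}\boldsymbol{X})=\boldsymbol{u}_{i}^{T}\mathrm{Cov}(\boldsymbol{X})\boldsymbol{u}_{i}$, which is immediate from bilinearity of the covariance form. Summing over $i=1,\ldots,k$ produces the claimed identity. There is no real obstacle here: the proof is essentially a linear-algebraic bookkeeping exercise and, notably, does not invoke the hypothesis that $\boldsymbol{X}$ is elliptically distributed. The only subtlety worth flagging is the implicit convention that $\mathrm{Var}$ on a vector denotes the scalar total variance; without this reading the formulation of the lemma (and the chain of equalities in the preceding proof) would be ambiguous.
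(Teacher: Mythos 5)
Your proof is correct and follows essentially the same route as the paper: translate away the affine shift, expand the projection in the orthonormal basis, kill the cross terms, and evaluate each $\mathrm{Var}(\boldsymbol{u}_{i}^{T}\boldsymbol{X})$ as $\boldsymbol{u}_{i}^{T}\mathrm{Cov}(\boldsymbol{X})\boldsymbol{u}_{i}$. If anything, your treatment of the cross-term cancellation is the more careful one: the paper asserts that the projections $\hat{X}_{i}=\left\langle \boldsymbol{X},\,\boldsymbol{u}_{i}\right\rangle \boldsymbol{u}_{i}$ are ``by definition uncorrelated,'' which is false for the scalar coefficients $\left\langle \boldsymbol{X},\,\boldsymbol{u}_{i}\right\rangle$ unless the $\boldsymbol{u}_{i}$ happen to diagonalize $\mathrm{Cov}(\boldsymbol{X})$ --- the cancellation really comes from $\boldsymbol{u}_{i}^{T}\boldsymbol{u}_{j}=0$ under the total-variance (trace) convention, exactly as you flag, and you are also right that ellipticity is never used.
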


\begin{proof}
Let $L^{'}\coloneqq L+\mathbb{E}(\boldsymbol{X})-\boldsymbol{v}$,
then the orthogonal projection $\pi_{L^{'}}(\boldsymbol{X})$ decomposes
into individual orthogonal projections to the respective basis vectors,
such that:
\begin{equation}
\pi_{L^{'}}(\boldsymbol{X})=\mathbb{E}(\boldsymbol{X})+\sum_{i=1}^{k}\left\langle \boldsymbol{X}-\mathbb{E}(\boldsymbol{X}),\,\boldsymbol{u}_{i}\right\rangle \boldsymbol{u}_{i}\label{eq:L1.2.1}
\end{equation}
Let $\hat{X}_{i}\coloneqq\left\langle \boldsymbol{X},\,\boldsymbol{u}_{i}\right\rangle \boldsymbol{u}_{i}$,
for $i\in\{1,\,\ldots\,,\,k\}$. The total variance of this projection
is then given by:
\begin{align}
\mathrm{Var}(\pi_{L^{'}}(\boldsymbol{X})) & \stackrel{_{\ref{eq:L1.2.1}}}{=}\mathrm{Var}\left(\mathbb{E}(\boldsymbol{X})+\sum_{i=1}^{k}\hat{X}_{i}-\sum_{i=1}^{k}\mathbb{E}(\hat{X}_{i})\right)\label{eq:L1.2.2}\\
 & =\mathrm{Var}\left(\sum_{i=1}^{k}\hat{X}_{i}\right)\nonumber 
\end{align}
Since the random variables $\hat{X}_{i}$ by definition are uncorrelated,
the algebraic formula for the variance can be used to decompose the
variance:
\begin{align}
\mathrm{Var}\left(\sum_{i=1}^{k}\hat{X}_{i}\right) & =\sum_{i=1}^{k}\mathrm{Var}(\hat{X}_{i})\label{eq:L1.2.3}
\end{align}
By equating the term $\mathrm{Var}(\hat{X}_{i})$, for $i\in\{1,\,\ldots\,,\,k\}$
it follows, that:

\begin{align}
\mathrm{Var}(\hat{X}_{i}) & =\mathrm{Var}\left(\left\langle \boldsymbol{X},\,\boldsymbol{u}_{i}\right\rangle \boldsymbol{u}_{i}\right)\label{eq:L1.2.4}\\
 & =\mathrm{Var}\left(\boldsymbol{X}^{\mathrm{T}}\boldsymbol{u}_{i}\right)\boldsymbol{u}_{i}^{2}\nonumber \\
 & =\mathrm{Var}\left(\boldsymbol{X}^{\mathrm{T}}\boldsymbol{u}_{i}\right)\nonumber 
\end{align}
And furthermore by introducing the covariance matrix $\mathrm{Cov}(\boldsymbol{X})$:
\begin{align}
\mathrm{Var}\left(\boldsymbol{X}^{\mathrm{T}}\boldsymbol{u}_{i}\right) & \stackrel{_{\text{def}}}{=}\mathbb{E}\left((\boldsymbol{X}^{\mathrm{T}}\boldsymbol{u}_{i})^{\mathrm{T}}(\boldsymbol{X}^{\mathrm{T}}\boldsymbol{u}_{i})\right)\label{eq:L1.2.5}\\
 & =\boldsymbol{u}_{i}^{\mathrm{T}}\mathbb{E}\left(\boldsymbol{X}^{\mathrm{T}}\boldsymbol{X}\right)\boldsymbol{u}_{i}\nonumber \\
 & \stackrel{_{\text{def}}}{=}\boldsymbol{u}_{i}^{T}\mathrm{Cov}(\boldsymbol{X})\boldsymbol{u}_{i}\nonumber 
\end{align}
Summarized the equations \ref{eq:L1.2.2}, \ref{eq:L1.2.3}, \ref{eq:L1.2.4}
and \ref{eq:L1.2.5} provide a representation for the variance of
the projection to $L^{'}$:
\[
\mathrm{Var}(\pi_{L^{'}}(\boldsymbol{X}))=\sum_{i=1}^{k}\boldsymbol{u}_{i}^{T}\mathrm{Cov}(\boldsymbol{X})\boldsymbol{u}_{i}
\]
Finally the total variance of the projection is invariant under translations
of $L$, such that:
\begin{align}
\mathrm{Var}(\pi_{L}(\boldsymbol{X})) & =\mathrm{Var}(\pi_{L}(\boldsymbol{X})+\mathbb{E}(\boldsymbol{X})-\boldsymbol{v})\label{eq:Invariance of Variance to Parrallel Translation}\\
 & =\mathrm{Var}(\pi_{L^{'}}(\boldsymbol{X}))\nonumber \\
 & \stackrel{\ref{eq:L1.2.3}}{=}\sum_{i=1}^{k}\boldsymbol{u}_{i}^{T}\mathrm{Cov}(\boldsymbol{X})\boldsymbol{u}_{i}\nonumber 
\end{align}
\end{proof}
Lemma \ref{lem:Variance of Projection}, shows, that for elliptically
distributed random vectors $\mathbf{X}$ the best fitting linear subspaces
are completely determined by the expectation $\mathbb{E}(\boldsymbol{X})$
and the covariance matrix $\mathrm{Cov}(\boldsymbol{X})$. On this
point it is important to notice, that the covariance matrix is symmetric,
which allows its diagonalization with regard to real valued Eigenvalues.
\begin{lem}
\label{lem:Variance of Projection 2}For $d\in\mathbb{N}$ let $\boldsymbol{X}\colon\Omega\to\mathbb{R}^{d}$
be an elliptically distributed random vector, $L\subseteq\mathbb{R}^{d}$
an affine linear subspace of $\mathbb{R}^{d}$, which for an $k\leq d$,
a vector $\boldsymbol{v}\in\mathbb{R}^{d}$ and an orthonormal basis
$\boldsymbol{u}_{1},\,\ldots\,,\,\boldsymbol{u}{}_{k}\in\mathbb{R}^{d}$
is given by:
\[
L=\boldsymbol{v}+\bigoplus_{i=1}^{k}\mathbb{R}\boldsymbol{u}_{i}
\]
Let further be $\pi_{L}\colon\mathbb{R}^{d}\to L$ the orthogonal
projection of $\mathbb{R}^{d}$ onto $L$, as well as $\lambda_{1},\,\ldots\,,\,\lambda_{d}\in\mathbb{R}$
the eigenvalues of $\mathrm{Cov}(\boldsymbol{X})$. Then there exist
numbers $a_{1},\,\ldots\,,\,a_{d}\in[0,\,1]$ with $\sum_{i=1}^{d}a_{i}=k$,
such that:
\[
\mathrm{Var}(\pi_{L}(\boldsymbol{X}))=\sum_{i=1}^{d}\lambda_{i}a_{i}
\]
\end{lem}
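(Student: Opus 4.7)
The plan is to start from Lemma \ref{lem:Variance of Projection}, which already expresses $\mathrm{Var}(\pi_L(\boldsymbol{X}))$ as $\sum_{i=1}^k \boldsymbol{u}_i^T\mathrm{Cov}(\boldsymbol{X})\boldsymbol{u}_i$, and then exploit the spectral theorem to change basis to the eigenbasis of $\mathrm{Cov}(\boldsymbol{X})$. Concretely, since $\mathrm{Cov}(\boldsymbol{X})$ is real and symmetric, there exists an orthonormal basis $\boldsymbol{e}_1,\ldots,\boldsymbol{e}_d$ of $\mathbb{R}^d$ consisting of eigenvectors with corresponding eigenvalues $\lambda_1,\ldots,\lambda_d$.

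Next I would expand each unit vector $\boldsymbol{u}_i$ in this eigenbasis as $\boldsymbol{u}_i = \sum_{j=1}^d c_{ij}\boldsymbol{e}_j$ with $c_{ij}=\langle \boldsymbol{u}_i,\boldsymbol{e}_j\rangle$. Since $\mathrm{Cov}(\boldsymbol{X})\boldsymbol{e}_j = \lambda_j\boldsymbol{e}_j$ and the $\boldsymbol{e}_j$ are orthonormal, a direct computation yields
\[
\boldsymbol{u}_i^T\mathrm{Cov}(\boldsymbol{X})\boldsymbol{u}_i = \sum_{j=1}^d \lambda_j c_{ij}^2,
\]
so after summing over $i$ and swapping the order of summation, Lemma \ref{lem:Variance of Projection} gives $\mathrm{Var}(\pi_L(\boldsymbol{X})) = \sum_{j=1}^d \lambda_j a_j$ with the coefficients $a_j \coloneqq \sum_{i=1}^k c_{ij}^2$.

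It remains to verify the claimed properties of the $a_j$. Non-negativity $a_j\geq 0$ is immediate as a sum of squares. For $\sum_j a_j = k$, I would swap the summation order once more and use $\|\boldsymbol{u}_i\|^2=\sum_j c_{ij}^2 = 1$ to obtain $\sum_{j=1}^d a_j = \sum_{i=1}^k 1 = k$. The bound $a_j\leq 1$ is the only mildly subtle step: it requires extending the orthonormal system $\boldsymbol{u}_1,\ldots,\boldsymbol{u}_k$ to a full orthonormal basis $\boldsymbol{u}_1,\ldots,\boldsymbol{u}_d$ of $\mathbb{R}^d$. Then the matrix $C=(c_{ij})$ with $c_{ij}=\langle \boldsymbol{u}_i,\boldsymbol{e}_j\rangle$ is orthogonal, so its columns have unit norm, i.e.\ $\sum_{i=1}^d c_{ij}^2 = 1$, and truncating to the first $k$ rows yields $a_j=\sum_{i=1}^k c_{ij}^2\leq 1$.

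The main (minor) obstacle is this last bound: one has to recognize that the partial sums of squared coordinates of orthonormal vectors with respect to a fixed orthonormal basis are dominated by the complete column sums of the corresponding orthogonal transition matrix. Everything else reduces to book-keeping with the spectral decomposition, and the elliptical-distribution assumption enters only implicitly via the earlier lemma.
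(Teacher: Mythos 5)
Your proof is correct and follows essentially the same route as the paper: start from Lemma \ref{lem:Variance of Projection}, diagonalize $\mathrm{Cov}(\boldsymbol{X})$ via the spectral theorem, and read off $a_j$ as the sum of the squared $j$-th coordinates of the $\boldsymbol{u}_i$ in the eigenbasis. One remark: your justification of $a_j\leq 1$ (extending $\boldsymbol{u}_1,\ldots,\boldsymbol{u}_k$ to a full orthonormal basis so that the transition matrix is orthogonal and its columns have unit norm) is actually more careful than the paper's, whose stated chain $\sum_{j}(c_{ji})^{2}\leq\sum_{j}|c_{ji}|\leq 1$ fails in general (e.g.\ two coordinates each equal to $1/\sqrt{2}$ give $\sum_{j}|c_{ji}|=\sqrt{2}>1$), so your version should be preferred.
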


\begin{proof}
From lemma \ref{lem:Variance of Projection} it follows, that:
\[
\mathrm{Var}(\pi_{L}(\boldsymbol{X}))=\sum_{j=1}^{k}\boldsymbol{u}_{j}^{T}\mathrm{Cov}(\boldsymbol{X})\boldsymbol{u}_{j}
\]
Since the covariance matrix $\mathrm{Cov}(\boldsymbol{X})$ is a symmetric
matrix, there exists an orthonormal basis transformation matrix $S\in\mathbb{R}^{d\times d}$
and a diagonal matrix $D\in\mathbb{R}^{d\times d}$, such that $\mathrm{Cov}(\boldsymbol{X})=S^{T}DS$.
Then the variance $\mathrm{Var}(\pi_{L}(\boldsymbol{X}))$ has a decomposition,
given by:

\begin{align*}
\mathrm{Var}(\pi_{L}(\boldsymbol{X})) & =\sum_{i=1}^{k}\boldsymbol{u}_{j}{}^{\mathrm{T}}S^{\mathrm{T}}DS\boldsymbol{u}_{j}\\
 & =\sum_{i=1}^{k}(S\boldsymbol{u}_{j})^{\mathrm{T}}DS\boldsymbol{u}_{j}
\end{align*}
For $j\in\{1,\,\ldots\,,\,k\}$ let now $\boldsymbol{c}_{j}\coloneqq S\boldsymbol{u}_{j}$
and for $i\in\{1,\,\ldots\,,\,n\}$ let the number $a_{i}\in\mathbb{R}$
be defined by: 
\[
a_{i}\coloneqq\sum_{j=1}^{k}(\boldsymbol{c}_{j}{}_{i})^{2}
\]
Then according to Lemma \ref{lem:Variance of Projection} the variance
$\mathrm{Var}(\pi_{L}(\boldsymbol{X}))$ can be decomposed:
\begin{align*}
\sum_{i=1}^{k}\boldsymbol{c}_{j}{}^{T}D\boldsymbol{c}_{j} & =\sum_{j=1}^{k}\sum_{i=1}^{d}\boldsymbol{c}_{j}{}_{i}\lambda_{i}\boldsymbol{c}_{j}{}_{i}\\
 & =\sum_{i=1}^{d}\lambda_{i}\sum_{j=1}^{k}(\boldsymbol{c}_{j}{}_{i})^{2}\\
 & =\sum_{i=1}^{d}\lambda_{i}a_{i}
\end{align*}
Furthermore since $\boldsymbol{u}_{1},\,\ldots\,,\,\boldsymbol{u}{}_{k}$
is an orthonormal basis and $S$ an orthonormal matrix it follows
that also $\boldsymbol{c}_{1},\,\ldots\,,\,\boldsymbol{c}{}_{k}$
is an orthonormal basis. Consequentially for $i\in\{1,\,\ldots\,,\,d\}$
it holds, that: 

\[
a_{i}=\sum_{j=1}^{k}(\boldsymbol{c}_{j}{}_{i})^{2}\leq\sum_{j=1}^{k}\left\Vert \boldsymbol{c}_{j}{}_{i}\right\Vert _{2}\leq1
\]
And furthermore by its definition it follows, that $a_{i}\geq0$,
such that $a_{i}\in[0,\,1]$. Besides this the sum over all $a_{i}$
equates to:

\begin{align*}
\sum_{i=1}^{d}a_{i} & =\sum_{i=1}^{d}\sum_{j=1}^{k}(\boldsymbol{c}_{j}{}_{i})^{2}\\
 & =\sum_{j=1}^{k}\boldsymbol{c}_{j}^{T}\boldsymbol{c}_{j}=k
\end{align*}
\end{proof}
With reference to the principal axis transformation, the eigenvectors
of the covariance matrix are then termed \textbf{principal components}
and affine linear subspaces of the embedding space as \textbf{linear
principal manifolds}.
\begin{defn*}[Linear Principal Manifold]
\label{def:Principal Component}\emph{For $d\in\mathbb{N}$ let $\boldsymbol{X}\colon\Omega\to\mathbb{R}^{d}$
be a random vector. Then a vector $\boldsymbol{c}\in\mathbb{R}^{d}$
with $\boldsymbol{c}\ne0$ is a principal component for $\boldsymbol{X}$,
iff there exists an $\lambda\in\mathrm{\mathbb{R}}$, such that :
\begin{equation}
\mathrm{Cov}(\boldsymbol{X})\cdot\boldsymbol{c}=\lambda\boldsymbol{c}\label{eq:Principal Component}
\end{equation}
Furthermore let $L\subseteq\mathbb{R}^{d}$ be an affine linear subspace
of $\mathbb{R}^{d}$ with dimension $k\leq d$. Then $L$ is a linear
}$k$-\emph{principal manifold for $\boldsymbol{X}$, if there exists
a set set $\boldsymbol{c}_{1},\,\ldots\,,\,\boldsymbol{c}_{k}$ of
linear independent principal components for $\boldsymbol{X}$, such
that:
\[
L=\mathbb{E}(\boldsymbol{X})+\bigoplus_{i=1}^{k}\mathbb{R}\boldsymbol{c}_{i}
\]
Then $L$ is termed maximal, iff the sum of the Eigenvalues $\lambda_{1},\,\ldots\,,\,\lambda_{k}$,
that correspond to the principal components $\boldsymbol{c}_{1},\,\ldots\,,\,\boldsymbol{c}_{k}$
is maximal. }
\end{defn*}
\begin{prop}
\label{prop:Principal Component Analysis}For $d\in\mathbb{N}$ let
$\boldsymbol{X}\colon\Omega\to\mathbb{R}^{d}$ be an elliptically
distributed random vector and $L\subseteq\mathbb{R}^{d}$ an affine
linear subspace. Then the following statements are equivalent: 

\emph{(i)} $L$ minimizes the Euclidean distance to $\boldsymbol{X}$

\emph{(ii)} $L$ is a maximal linear principal manifold for $\boldsymbol{X}$
\end{prop}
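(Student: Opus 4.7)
The plan is to chain together the three preceding results so that the geometric statement collapses into an elementary extremum problem. By Proposition \ref{prop:Affine fitting of Elliptic distributions}, statement (i) is equivalent to the conjunction that $\mathbb{E}(\boldsymbol{X})\in L$ and that $L$ maximizes $\mathrm{Var}(\pi_{L}(\boldsymbol{X}))$ among all affine subspaces of dimension $k$. The containment $\mathbb{E}(\boldsymbol{X})\in L$ is already built into the definition of a linear principal manifold, so the task reduces to characterizing which $k$-dimensional direction subspaces maximize the projection variance.

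For this I would invoke Lemma \ref{lem:Variance of Projection 2}: for any orthonormal basis $\boldsymbol{u}_{1},\ldots,\boldsymbol{u}_{k}$ of the direction space of $L$, one has $\mathrm{Var}(\pi_{L}(\boldsymbol{X}))=\sum_{i=1}^{d}\lambda_{i}a_{i}$ with coefficients $a_{i}\in[0,1]$ satisfying $\sum_{i=1}^{d}a_{i}=k$. The problem of maximizing the variance thus becomes a linear optimization over the capped simplex $\{a\in[0,1]^{d}\colon \sum a_{i}=k\}$. Ordering the eigenvalues $\lambda_{\sigma(1)}\geq\ldots\geq\lambda_{\sigma(d)}$, the well-known solution is $a_{\sigma(i)}=1$ for $i\leq k$ and $a_{\sigma(i)}=0$ for $i>k$, yielding the maximum value $\sum_{i=1}^{k}\lambda_{\sigma(i)}$.

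To translate this solution back to a statement about $L$, I would use the representation $\boldsymbol{c}_{j}\coloneqq S\boldsymbol{u}_{j}$ from the proof of Lemma \ref{lem:Variance of Projection 2}, where $S$ is the orthonormal change of basis that diagonalizes $\mathrm{Cov}(\boldsymbol{X})$ and $a_{i}=\sum_{j=1}^{k}(\boldsymbol{c}_{j\,i})^{2}$. The condition that $a_{\sigma(i)}=1$ precisely for the $k$ largest eigenvalues forces $\mathrm{span}(\boldsymbol{u}_{1},\ldots,\boldsymbol{u}_{k})$ to coincide with the span of the eigenvectors $\boldsymbol{c}_{\sigma(1)},\ldots,\boldsymbol{c}_{\sigma(k)}$ of $\mathrm{Cov}(\boldsymbol{X})$ belonging to its $k$ largest eigenvalues. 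By the definition of a maximal linear principal manifold, this is precisely statement (ii), and the same chain read in reverse establishes (ii)$\Rightarrow$(i).

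The main obstacle I anticipate is the careful handling of degeneracies in the eigenvalue spectrum. When $\lambda_{\sigma(k)}=\lambda_{\sigma(k+1)}$ or, more generally, when the threshold eigenvalue has multiplicity greater than one, the optimal vector of coefficients $(a_{i})$ is no longer unique, and multiple orthonormal bases give rise to the same variance. The equivalence must therefore be stated and verified at the level of the subspace $L$ rather than at the level of a specific spanning set of eigenvectors; concretely, one has to argue that every optimal choice of $(a_{i})$ yields a basis whose span is an allowed maximal eigenspace sum of $\mathrm{Cov}(\boldsymbol{X})$, and conversely. Apart from this bookkeeping, all remaining steps are direct consequences of the two lemmas and the earlier proposition, with no further computation needed.
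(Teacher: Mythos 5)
Your proposal follows essentially the same route as the paper: reduce (i) via Proposition \ref{prop:Affine fitting of Elliptic distributions} to centering plus variance maximization, apply Lemma \ref{lem:Variance of Projection 2} to turn this into a linear program over the capped simplex $\{a\in[0,1]^{d}\colon\sum a_{i}=k\}$, and identify the optimizers with spans of eigenvectors of the $k$ largest eigenvalues. Your additional remark on degenerate eigenvalue spectra is a genuine refinement the paper silently skips over, but it does not change the argument's structure.
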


\begin{proof}
``$\Longrightarrow$'' Let $\pi_{L}\colon\mathbb{R}^{d}\hookrightarrow L$
denote the orthogonal projection of $\mathbb{R}^{d}$ onto $L$. Then
according to proposition \ref{prop:Affine fitting of Elliptic distributions}
$L$ minimizes the averaged Euclidean distance to $\boldsymbol{X}$,
if and only if (i) $\mathbb{E}(\boldsymbol{X})\in L$ and (ii) $L$
maximizes the variance $\mathrm{Var}(\pi_{L}(\boldsymbol{X}))$. In
particular (i) is satisfied, if and only if an orthonormal basis $\boldsymbol{u}_{1},\,\ldots\,,\,\boldsymbol{u}{}_{k}\in\mathbb{R}^{d}$
can be chosen, such that:
\[
L=\mathbb{E}(\boldsymbol{X})+\bigoplus_{i=1}^{k}\mathbb{R}\boldsymbol{u}_{i}
\]
Then according to Lemma \ref{lem:Variance of Projection 2} there
exist numbers $a_{1},\,\ldots\,,\,a_{d}\in[0,\,1]$ with $\sum_{i=1}^{d}a_{i}=k$,
such that:
\[
\mathrm{Var}(\pi_{L}(\boldsymbol{X}))=\sum_{i=1}^{d}\lambda_{i}a_{i}
\]
Thereupon (ii) is satisfied, if and only if the numbers $a_{i}$ maximize
this sum. Since the covariance matrix $\mathrm{Cov}(\boldsymbol{X})$
is positive semi-definite, the eigenvalues $\lambda_{i}$ are not
negative such that the sum is maximized for:

\begin{align*}
a_{i} & =\begin{cases}
1 & \text{for }i\in\{1,\,\ldots\,k\}\\
0 & \text{else}
\end{cases}
\end{align*}
Such that:

\begin{align*}
\sum_{j=1}^{k}\boldsymbol{u}_{j}^{T}\mathrm{Cov}(\boldsymbol{X})\boldsymbol{u}_{j} & =\sum_{i=1}^{d}\lambda_{i}a_{i}\\
 & =\sum_{i=1}^{k}\lambda_{i}\\
 & =\sum_{j=1}^{k}\boldsymbol{c}_{j}^{T}\mathrm{Cov}(\boldsymbol{X})\boldsymbol{c}_{j}
\end{align*}
Accordingly the choice $\boldsymbol{u}_{j}=\boldsymbol{c}_{j}$ for
$i\in\{1,\,\ldots\,,\,k\}$ maximizes $\mathrm{Var}(\pi_{L}(\boldsymbol{X}))$
and $L$ has a representation, given by:
\[
L=\mathbb{E}(\boldsymbol{X})+\bigoplus_{i=1}^{k}\mathbb{R}\boldsymbol{c}_{i}
\]
''$\Longleftarrow$'' Let $L$ have a representation as given by
(ii), then (1) $\mathbb{E}(\boldsymbol{X})\in L$ and (2) the variance
$\mathrm{Var}(\pi_{L}(\boldsymbol{X}))$ is maximized. According to
Proposition \ref{prop:Affine fitting of Elliptic distributions} it
follows, that $L$ minimizes the Euclidean distance to $\boldsymbol{X}$.
\end{proof}
\begin{defn*}[$L$-Correlation]
\label{def:L-Correlation}\emph{For $d\in\mathbb{N}$ let $\boldsymbol{X}\colon\Omega\to\mathbb{R}^{d}$
be a random vector and $L$ a maximal linear principal manifold for
$\boldsymbol{X}$. Then for any $i,\,j\in\{1,\,\ldots\,,\,d\}$ let
the $L$-Correlation between $X_{i}$ and $X_{j}$ be defined by:
\begin{equation}
\rho_{X_{i},X_{j}\mid L}^{2}\coloneqq R_{i}R_{j}\label{eq:L-Correlation 1}
\end{equation}
where with the orthogonal projection }$\pi_{L}\colon\mathbb{R}^{d}\to L$\emph{
for any $i\in\{1,\,\ldots\,,\,d\}$ the }\textbf{\emph{reliability}}\emph{
of $X_{i}$ with respect to $L$ is given by:
\begin{equation}
R_{i}\coloneqq1-\frac{\mathrm{Var}_{i}\left(\boldsymbol{X}-\pi_{L}(\boldsymbol{X})\right)}{\mathrm{Var}_{i}(\boldsymbol{X})}\label{eq:L-Correlation 2}
\end{equation}
}
\end{defn*}
\begin{prop}
\label{prop:L-Correlation}For an elliptically distributed random
vector the $L$-Correlation generalizes the Pearson Correlation to
maximal linear principal manifolds.
\end{prop}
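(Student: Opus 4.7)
The plan is to reduce the claim to an application of Proposition \ref{prop:Correlation Reliability} by reinterpreting the orthogonal projection residual $\boldsymbol{X}-\pi_{L}(\boldsymbol{X})$ as the error in an errors\nobreakdash-in\nobreakdash-variables model. Since the Pearson correlation is a pairwise scalar quantity, it suffices to verify the correspondence for an arbitrary pair of coordinates $X_{i},X_{j}$, so I would first restrict attention to these two components and to the maximal one\nobreakdash-dimensional linear principal manifold $L$, which by Proposition \ref{prop:Principal Component Analysis} is exactly the total least squares regression line spanned by the leading eigenvector of $\mathrm{Cov}(\boldsymbol{X})$ and passing through $\mathbb{E}(\boldsymbol{X})$.

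The next step is to exploit the defining property of elliptical distributions. I would write $\boldsymbol{X}=\pi_{L}(\boldsymbol{X})+(\boldsymbol{X}-\pi_{L}(\boldsymbol{X}))$ and argue that, because $\boldsymbol{X}\sim A\boldsymbol{S}+\boldsymbol{b}$ with a rotationally invariant $\boldsymbol{S}$, the orthogonal projection $\pi_{L}(\boldsymbol{X})$ onto a principal manifold and the complementary residual are not merely uncorrelated (as established in the proof of Proposition \ref{prop:Affine fitting of Elliptic distributions}) but statistically independent. This justifies identifying $\boldsymbol{X}^{\star}\coloneqq\pi_{L}(\boldsymbol{X})$ as the unobserved error\nobreakdash-free vector on $L$ and $\boldsymbol{\varepsilon}\coloneqq\boldsymbol{X}-\pi_{L}(\boldsymbol{X})$ as a zero\nobreakdash-mean error term statistically independent from $\boldsymbol{X}^{\star}$, exactly in the sense required by Proposition \ref{prop:Correlation Reliability}. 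Under this identification one reads off
\[
\eta_{X_{\ell}}^{2}=\mathrm{Var}_{\ell}\bigl(\boldsymbol{X}-\pi_{L}(\boldsymbol{X})\bigr)\qquad\text{for }\ell\in\{i,j\},
\]
so that the reliability $R_{\ell}$ from the definition of the $L$\nobreakdash-Correlation coincides with the reliability factor $1-\eta_{X_{\ell}}^{2}/\sigma_{X_{\ell}}^{2}$ appearing in equation \ref{eq:Correlation Reliability}.

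Substituting this back, Proposition \ref{prop:Correlation Reliability} yields $\rho_{x_{i},x_{j}}^{2}\overset{P}{\to}R_{i}R_{j}=\rho_{X_{i},X_{j}\mid L}^{2}$, which is precisely the claimed generalization: restricted to an elliptically distributed pair fitted by its maximal one\nobreakdash-dimensional principal manifold, the $L$\nobreakdash-Correlation recovers the Pearson correlation. For higher\nobreakdash-dimensional coordinates $i,j\in\{1,\ldots,d\}$ the same identification of the orthogonal residual with the error term extends the reliability representation component\nobreakdash-wise, so that the definition via $R_{i}R_{j}$ provides the desired extension to arbitrary maximal linear principal manifolds.

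The main obstacle lies in step two: justifying that the orthogonal residual $\boldsymbol{X}-\pi_{L}(\boldsymbol{X})$ may be substituted for the coordinate\nobreakdash-wise independent errors $\varepsilon_{X_{i}},\varepsilon_{X_{j}}$ posited in Proposition \ref{prop:Correlation Reliability}. The components of the residual all live in the $(d-k)$\nobreakdash-dimensional orthogonal complement of $L$ and are therefore linearly, hence in the elliptical case also statistically, constrained; so the normality\nobreakdash-plus\nobreakdash-independence hypothesis of Proposition \ref{prop:Correlation Reliability} has to be re\nobreakdash-read as a statement about the independence of $\pi_{L}(\boldsymbol{X})$ and $\boldsymbol{X}-\pi_{L}(\boldsymbol{X})$, rather than of the error coordinates among themselves. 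The elliptical hypothesis is exactly what is needed to bridge these two formulations, and once this reinterpretation is in place the reliability formula can be invoked coordinate\nobreakdash-wise to close the argument.
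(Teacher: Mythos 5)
Your proposal is correct and follows essentially the same route as the paper: identify the coordinate-wise variance of the orthogonal residual $\boldsymbol{X}-\pi_{L}(\boldsymbol{X})$ with the error variance $\eta_{X_{\ell}}^{2}$ of the errors-in-variables model, observe that the reliability $R_{\ell}$ then coincides with the attenuation factor $1-\eta_{X_{\ell}}^{2}/\sigma_{X_{\ell}}^{2}$, and invoke Proposition \ref{prop:Correlation Reliability} to conclude $\rho_{x_{i},x_{j}}^{2}\overset{P}{\to}R_{i}R_{j}=\rho_{X_{i},X_{j}\mid L}^{2}$. Your explicit justification of why the residual may stand in for the independent errors of Proposition \ref{prop:Correlation Reliability} (via the statistical independence of $\pi_{L}(\boldsymbol{X})$ and the residual under ellipticity) is a point the paper leaves implicit, but it does not change the structure of the argument.
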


\begin{proof}
Let $\boldsymbol{X}\colon\Omega\to\mathbb{R}^{2}$ be an elliptically
distributed random vector and $L$ a maximal linear $1$-principal
manifold for $\boldsymbol{X}$. Then for $i\in\{1,\,2\}$ the random
error of the variable $X_{i}$ has a variance:
\[
\eta_{X_{i}}^{2}=\mathrm{Var}_{X_{i}}\left(\boldsymbol{X}-\pi_{L}(\boldsymbol{X})\right)
\]
Such that by the definition of the reliability it follows, that:
\[
R_{i}\stackrel{{\scriptstyle \text{\ref{eq:L-Correlation 2}}}}{=}1-\frac{\eta_{X_{i}}^{2}}{\sigma_{X_{i}}^{2}}
\]
Consequently:
\[
\rho_{X_{i},X_{j}\mid L}^{2}=\left(1-\frac{\eta_{X_{i}}^{2}}{\sigma_{X_{i}}^{2}}\right)\left(1-\frac{\eta_{X_{j}}^{2}}{\sigma_{X_{j}}^{2}}\right)
\]
With $n\in\mathbb{N}$ i.i.d. realizations $\boldsymbol{x}\in\mathbb{R}^{n\times2}$
of $\boldsymbol{X}$ an empirical $L$-Correlation $\rho_{x_{i},x_{j}\mid L}^{2}$
is then given by replacing the variances by the sample variances.
Then by proposition \ref{prop:Correlation Reliability} it follows,
that:
\[
\rho_{x_{i},x_{j}}^{2}\overset{{\scriptstyle P}}{\to}\rho_{x_{i},x_{j}\mid L}^{2},\text{ for }n\to\infty
\]
\end{proof}

\section{The Riemann-Pearson Correlation}

Linear principal manifolds allow the projection of a random vector
$\boldsymbol{X}\colon\Omega\to\mathbb{R}^{d}$ onto a linear subspace
$L\subseteq\mathbb{R}^{d}$, which maximally preserves the linear
dependency structure of $\boldsymbol{X}$ in terms of its covariances.
Thereby for the orthogonal projection $\pi_{L}\colon\mathbb{R}^{d}\hookrightarrow L$,
the variance on $L$, given by $\mathrm{Var}(\pi_{L}(\boldsymbol{X}))$,
is referred as the \textbf{explained variance} and the orthogonal
deviation $\mathrm{Var}(\boldsymbol{X}-\pi_{L}(\boldsymbol{X}))$
as the \textbf{unexplained variance}. Thereupon by the assumption,
that $\boldsymbol{X}$ is elliptically distributed, it can be concluded,
that linear independence coincides with statistically independence,
that that $\pi_{L}(\boldsymbol{X})$ and $\boldsymbol{X}-\pi_{L}(\boldsymbol{X})$
are statistically independent and therefore allow the following decomposition:
\[
\underset{{\scriptstyle \text{total variance}}}{\underbrace{\mathrm{Var}(\boldsymbol{X})}}=\underset{{\scriptstyle \text{explained variance}}}{\underbrace{\mathrm{Var}(\pi_{L}(\boldsymbol{X}))}}+\underset{{\scriptstyle \text{unexplained variance}}}{\underbrace{\mathrm{Var}(\boldsymbol{X}-\pi_{L}(\boldsymbol{X}))}}
\]
This decomposition, as shown by theorem \ref{prop:L-Correlation},
is of fundamental importance for the correlation over linear Principal
Manifolds, since it determines the reliabilities of the respective
random variable $X$ by the ratio:

\[
R=1-\frac{{\scriptstyle \text{explained variance}}}{{\scriptstyle \text{total variance}}}
\]
\begin{figure}[h]
\begin{centering}
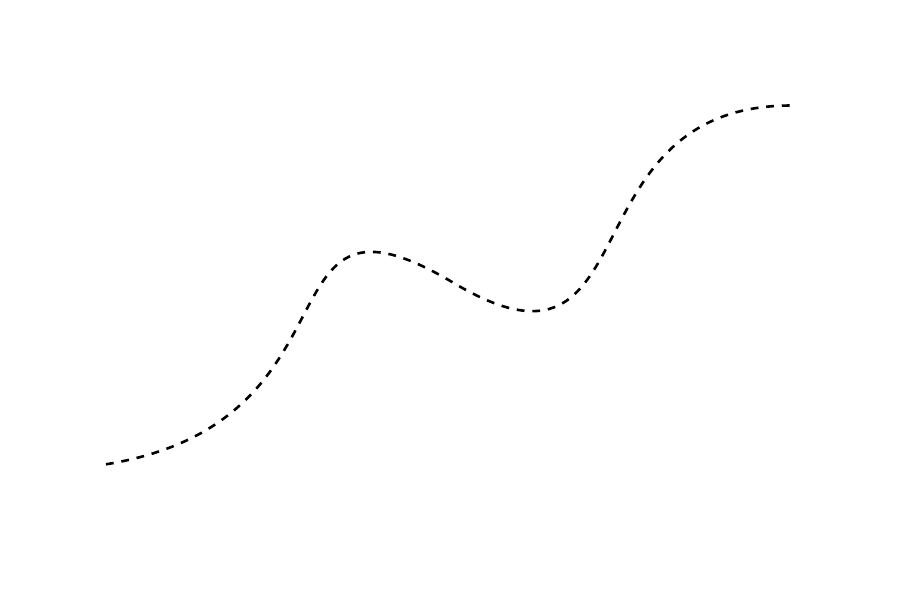
\par\end{centering}
\caption{\label{fig:Principal Curves}Principal Curve for a 2-dimensional realization}
\end{figure}
On this point of the discussion it's just a small step
to generalize the principal components, by a smooth curves $\gamma\colon[a,\,b]\to\mathbb{R}^{d}$
(figure \ref{fig:Principal Curves}). This is particular appropriate,
if the assumption of an elliptically distribution can only hardly
be justified, like for \textbf{observed dynamical systems}. Thereby
the evolution function generates a smooth submanifold $\mathcal{M}\subseteq\mathbb{R}^{d}$
within the observation space $\mathbb{R}^{d}$, and an ``error free''
observation can be identified by a random vector $\boldsymbol{X}^{\star}$,
with outcomes on $\mathcal{M}$. Additionally, however, the observation
function may be regarded to be subjected to a measurement error $\boldsymbol{\varepsilon}$.
By the assumption, that $\boldsymbol{\varepsilon}$ has an elliptical
distribution, then the distribution of the observable random vector
$\boldsymbol{X}$ is represented by a elliptical $\mathcal{M}-$distribution.
\begin{defn*}[$\mathcal{M}$-Distribution]
\label{def:M-Distribution}\emph{For $d\in\mathbb{N}$ let $\boldsymbol{X}^{\star}\colon\Omega\to\mathbb{R}^{d}$
be a random vector and $\mathcal{M}\subseteq\mathbb{R}^{d}$ a smooth
$k-$submanifold of $\mathbb{R}^{d}$ with $k\leq d$. Then $\boldsymbol{X}^{\star}$
is $\mathcal{M}-$distributed, iff for the probability density $P$,
which is induced by $\boldsymbol{X}^{\star}$, it holds, that: 
\begin{equation}
P(\boldsymbol{X}^{\star}=\boldsymbol{x})>0\Leftrightarrow\boldsymbol{x}\in\mathcal{M}\label{eq:M-Distribution}
\end{equation}
Thereupon a random vector $\boldsymbol{X}\colon\Omega\to\mathbb{R}^{d}$
is elliptically $\mathcal{M}$-distributed, iff there exists an $\mathcal{M}$-distributed
random vector $\boldsymbol{X}^{\star}\colon\Omega\to\mathbb{R}^{d}$
and an elliptically distributed random error $\boldsymbol{\varepsilon}\colon\Omega\to\mathbb{R}^{d}$,
such that:
\begin{equation}
\boldsymbol{X}\sim\boldsymbol{X}^{\star}+\boldsymbol{\varepsilon}\label{eq:Elliptical M-Distribution}
\end{equation}
}
\end{defn*}
\begin{figure}[h]
\begin{centering}
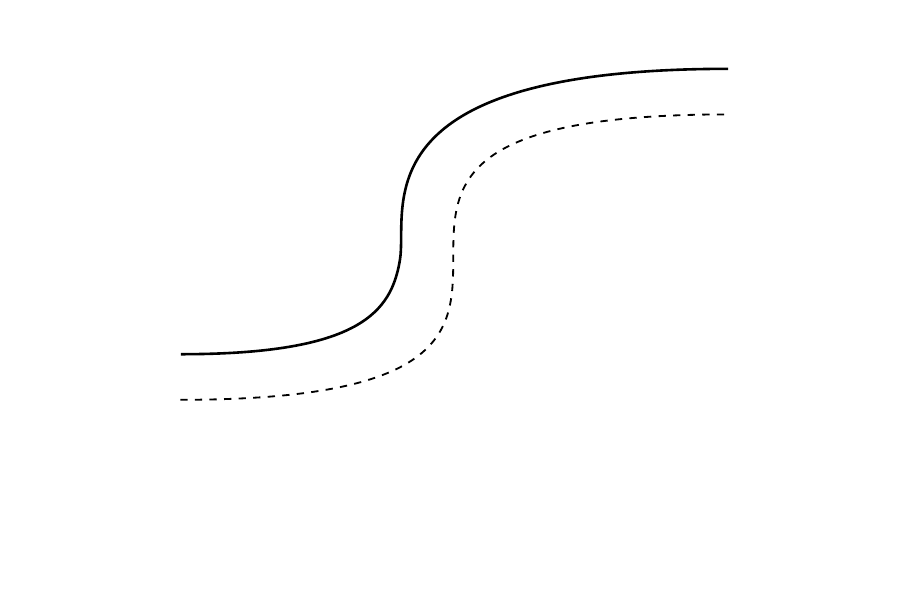
\par\end{centering}
\caption{\label{fig:Smooth Tubular Distributions}Elliptical $\mathcal{M}$-distribution
in $3$ dimensions}
\end{figure}
The assumption, that the observed random vector $\boldsymbol{X}$,
is elliptically $\mathcal{M}$-distributed, is very general, but allows
an estimation of $\mathcal{M}$ by minimizing the averaged Euclidean
distance to $\boldsymbol{X}$. Thereby the tangent spaces $T_{x}\mathcal{M}$
have a basis, given by $k$ principal components of local infinitesimal
covariances, such that the remaining $d-k$ principal components describe
the normal space $N_{x}\mathcal{M}$, which is orthogonal to the tangent
space $T_{x}\mathcal{M}$. Since $T_{x}\mathcal{M}$ and $N_{x}\mathcal{M}$
are equipped with an induced Riemannian metric, which is simply given
by the standard scalar product, there exists a minimal orthogonal
projection $\pi_{\mathcal{M}}\colon\mathbb{R}^{d}\hookrightarrow\mathcal{M}$,
that maps any realization $\boldsymbol{x}$ of $\boldsymbol{X}$ to
a closest point on $\mathcal{M}$. Then proposition \ref{prop:Affine fitting of Elliptic distributions}
motivates properties for $\mathcal{M}$ to minimize the averaged Euclidean
distance to realizations of $\boldsymbol{X}$. This provides the definition
of smooth $k$-principal manifolds (\nocite{Hastie1989}Hastie et
al. 1989, p513).
\begin{defn*}[Principal Manifold]
\label{def:Principal Manifold}\emph{For $d\in\mathbb{N}$ let $\boldsymbol{X}\colon\Omega\to\mathbb{R}^{d}$
be a random vector, $\mathcal{M}\subseteq\mathbb{R}^{d}$ a (smooth)
$k$-submanifold of $\mathbb{R}^{d}$ with $k\leq d$ and $\pi_{\mathcal{M}}\colon\mathbb{R}^{d}\hookrightarrow\mathcal{M}$
a minimal orthogonal projection onto $\mathcal{M}$. Then $\mathcal{M}$
is a (smooth) $k$-principal manifold for $\boldsymbol{X}$, iff $\forall\boldsymbol{x}\in\mathcal{M}$
it holds, that: 
\begin{equation}
\mathbb{E}(\boldsymbol{X}\in\pi_{\mathcal{M}}^{-1}(\boldsymbol{x}))=\boldsymbol{x}\label{eq:Principal Manifold}
\end{equation}
Furthermore $\mathcal{M}$ is termed maximal, iff $\mathcal{M}$ maximizes
the explained variance $\mathrm{Var}(\pi_{\mathcal{M}}(\boldsymbol{X}))$.}
\end{defn*}
By extending the local properties of the tangent spaces to the underlying
manifold, by propositions \ref{prop:Affine fitting of Elliptic distributions}
and \ref{prop:Principal Component Analysis} it can be concluded,
that maximal principal manifolds minimize the Euclidean distance to
$\boldsymbol{X}$. Intuitively this can be understood as follows:
The principal manifold property assures, that:

\[
\mathrm{Var}(\boldsymbol{X})=\mathrm{Var}(\pi_{\mathcal{M}}(\boldsymbol{X}))+\mathrm{Var}(\boldsymbol{X}-\pi_{\mathcal{M}}(\boldsymbol{X}))
\]
Consequently the choice of $\mathcal{M}$ maximizes $\mathrm{Var}(\pi_{\mathcal{M}}(\boldsymbol{X}))$
if and only if it minimizes $\mathrm{Var}(\boldsymbol{X}-\pi_{\mathcal{M}}(\boldsymbol{X}))$,
which equals the variance of the error and therefore the Euclidean
distance. At closer inspection, however, it turns out, that in difference
to linear principal manifolds, the maximization problem is ill-defined
for arbitrary smooth principal manifolds, since for any finite number
of realizations trivial solutions can be found by smooth principal
manifolds, that interpolate the realizations and therefore provide
a perfect explanation. In order to close this gap, further structural
structural assumptions have to be incorporated, ether by a parametric
family $\{\boldsymbol{f}_{\theta}\}_{\theta\in\Theta}$ that restricts
the possible solutions - or by a regularization, as given in the elastic
map algorithm that penalizes long distances and strong curvature (\nocite{Gorban2008}Gorban
et al. 2008). Due to the complexity of this topic, however, it is
left to the second chapter, where Energy based models are used to
overcome this deficiency. In the following the generalization of the
correlation to smooth principal manifolds for convenience is defined
with respect to a principal manifold $\mathcal{M}$, which is maximal
``with respect to appropriate restrictions''.
\begin{defn*}[Riemann-Pearson Correlation]
\label{def:M-Correlation}\emph{For $d\in\mathbb{N}$ let $\boldsymbol{X}\colon\Omega\to\mathbb{R}^{d}$
be a random vector, $\mathcal{M}$ a smooth principal manifold for
$\boldsymbol{X}$, which is maximal ``with respect to appropriate
restrictions'' and $\pi_{\mathcal{M}}\colon\mathbb{R}^{d}\to\mathcal{M}$
a minimal orthogonal projection. Then for any $i,\,j\in\{1,\,\ldots\,,\,d\}$
the Riemann-Pearson Correlation between $X_{i}$ and $X_{j}$ is given
by:
\begin{equation}
\rho_{X_{i},X_{j}\mid\mathcal{M}}^{2}\coloneqq R_{i}R_{j}\int_{\mathcal{M}}S_{i,j}(\boldsymbol{x})S_{j,i}(\boldsymbol{x})\,\mathrm{d}P_{\mathcal{M}}\label{eq:M-Correlation}
\end{equation}
where for $i\in\{1,\,\ldots\,,\,d\}$ the }\textbf{\emph{reliability}}\emph{
of $X_{i}$ with respect to $\mathcal{M}$ is given by:
\begin{equation}
R_{i}\coloneqq1-\frac{\mathrm{Var}_{i}\left(\boldsymbol{X}-\pi_{\mathcal{M}}(\boldsymbol{X})\right)}{\mathrm{Var}_{i}(\boldsymbol{X})}\label{eq:M-Correlation 2}
\end{equation}
and for $i,\,j\in\{1,\,\ldots\,,\,d\}$ the }\textbf{\emph{local sensitivity}}\emph{
of $X_{i}$ with respect to $X_{j}$ by:
\begin{equation}
S_{i,j}(\boldsymbol{x})\coloneqq\left.\frac{\partial}{\partial x_{j}}(\boldsymbol{x}-\pi_{\mathcal{M}}(\boldsymbol{x}))\right|_{i}\label{eq:M-Correlation 3}
\end{equation}
}
\end{defn*}
\begin{prop}
\label{Prop:M-Correlation}For an elliptical $\mathcal{M}-$distributed
random vector $\boldsymbol{X}$ the Riemann-Pearson Correlation generalizes
the $L$-Correlation to smooth principal manifolds.
\end{prop}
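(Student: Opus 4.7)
The plan is to specialise the Riemann-Pearson Correlation to the case $\mathcal{M} = L$ of a maximal linear principal manifold and verify that the extra integral factor reduces the formula to the $L$-correlation $R_i R_j$. The asymptotic identification with the classical Pearson correlation then follows directly from Proposition~\ref{prop:L-Correlation} via the convergence $\rho_{x_i, x_j}^2 \overset{P}{\to} R_i R_j$.

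First I would use that for an affine $L$ the orthogonal projection $\pi_L$ is itself affine. Writing $\pi_L(\boldsymbol{x}) = P_L(\boldsymbol{x}-\boldsymbol{v}) + \boldsymbol{v}$ for some $\boldsymbol{v} \in L$ and the symmetric orthogonal projection matrix $P_L$ onto the linear part of $L$, the residual $\boldsymbol{x} - \pi_L(\boldsymbol{x}) = (I - P_L)(\boldsymbol{x} - \boldsymbol{v})$ is linear in $\boldsymbol{x}$. Its Jacobian is the constant matrix $I - P_L$, so the local sensitivities collapse to
\[
S_{i,j}(\boldsymbol{x}) = (I - P_L)_{ij} = S_{j,i}(\boldsymbol{x}),
\]
constant on $L$, and the reliability $R_i$ in the Riemann-Pearson definition coincides with the one used in the $L$-correlation. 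Since $P_L$ is a probability measure and the sensitivities are constant, the integral $\int_L S_{i,j} S_{j,i}\,\mathrm{d}P_L$ collapses to $(I - P_L)_{ij}^2$, and the task reduces to showing that this scalar is absorbed into the $L$-correlation formula.

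The main obstacle is precisely this absorption step: for a generic affine $L$ the off-diagonal entry $(I - P_L)_{ij}$ depends on the orientation of $L$ relative to the coordinate axes, so the identification is not a purely algebraic identity. I expect the proof to require a change of basis to the principal components of $\mathrm{Cov}(\boldsymbol{X})$, where Lemma~\ref{lem:Variance of Projection 2} already provides the diagonalisation of the explained and unexplained variance. In that basis the sensitivity term and the reliability should share common covariance-structural factors that combine into the $L$-correlation expression, after which the claim follows immediately by invoking Proposition~\ref{prop:L-Correlation} and the elliptical $\mathcal{M}$-distribution assumption to identify $\mathrm{Var}_i(\boldsymbol{X} - \pi_L(\boldsymbol{X}))$ with the error variance $\eta_{X_i}^2$.
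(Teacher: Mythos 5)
Your computation of the Jacobian is the crux, and it exposes the gap. Under the literal reading of the definition of the local sensitivity, $S_{i,j}(\boldsymbol{x})$ is the $(i,j)$ entry of the Jacobian of the residual map, which for affine $L$ is the constant symmetric matrix $I-P_L$, exactly as you say. But then the integrand is $(I-P_L)_{ij}^2$, and this is \emph{not} equal to $1$ in general: for a line of slope $m$ through the origin in $\mathbb{R}^2$ one gets $(I-P_L)_{12}=-m/(1+m^2)$, so the integral contributes $m^2/(1+m^2)^2\le 1/4$. No change of basis can absorb this factor into $R_iR_j$, because $R_iR_j$ already \emph{is} the full $L$-correlation by definition; the ``absorption step'' you flag as the main obstacle is not merely difficult, it is impossible under this reading.

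The paper takes a different (and incompatible) route: it treats $S_{i,j}$ and $S_{j,i}$ as the two mutually reciprocal slopes of the total least squares relationship along $L$ --- the $i$-th coordinate as a function of the $j$-th and vice versa --- so that $S_{i,j}\equiv\beta$, $S_{j,i}\equiv 1/\beta$, the integrand is identically $1$, and the normalization $\int_{\mathcal{M}}\mathrm{d}P_{\mathcal{M}}=1$ gives $\rho_{X_{i},X_{j}\mid\mathcal{M}}^{2}=R_iR_j=\rho_{X_{i},X_{j}\mid L}^{2}$ in two lines; this echoes the relation $\beta_{x}^{\star}\beta_{y}^{\star}=1$ from the regression-dilution discussion. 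That reading cannot be reconciled with the symmetric Jacobian you computed: $S_{i,j}=S_{j,i}$ together with $S_{i,j}S_{j,i}=1$ would force $\beta=\pm1$. So either the definition of $S_{i,j}$ must be understood as a slope along the manifold rather than as a partial derivative of the residual map (in which case the proof is the paper's short verification and your Jacobian computation answers a different question), or the reciprocality claim fails and the reduction to $R_iR_j$ does not hold as stated. Your proposal correctly senses that something does not close up here, but it does not resolve it; as written, it is not a complete proof.
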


\begin{proof}
Let $L$ be a maximal linear principal manifold for $\boldsymbol{X}\colon\Omega\to\mathbb{R}^{d}$,
and $\boldsymbol{x}$ a realization of $\boldsymbol{X}$ then there
exists an $\beta\in\mathbb{R}$ with:
\[
S_{i,j}(\boldsymbol{x})=\left.\frac{\partial}{\partial x_{j}}(\boldsymbol{x}-\pi_{L}(\boldsymbol{x}))\right|_{i}\equiv\beta
\]
Furthermore for $c\ne0$:
\[
S_{j,i}(\boldsymbol{x})=\left.\frac{\partial}{\partial x_{i}}(\boldsymbol{x}-\pi_{L}(\boldsymbol{x}))\right|_{j}\equiv\frac{1}{\beta}
\]
Such that $S_{i,j}(\boldsymbol{x})S_{j,i}(\boldsymbol{x})=1$. Consequently
for $\mathcal{M}=L$ it follows, that:
\begin{align*}
\rho_{X_{i},X_{j}\mid\mathcal{M}}^{2} & =R_{i}R_{j}\int_{\mathcal{M}}S_{i,j}(\boldsymbol{x})S_{j,i}(\boldsymbol{x})\,\mathrm{d}P_{\mathcal{M}}\\
 & =R_{i}R_{j}\int_{\mathcal{M}}\,\mathrm{d}P_{\mathcal{M}}\\
 & =R_{i}R_{j}=\rho_{X_{i},X_{j}\mid L}^{2}
\end{align*}
\end{proof}
\bibliographystyle{unsrt}
\bibliography{articles}

\begin{thebibliography}{1}

\bibitem{Zheng2010}
Shurong Zheng, Ning-zhong Shi, and Zhengjun Zhang.
\newblock {Generalized Measures of Correlation}.
\newblock {\em Manuscript}, pages 1--45, 2010.

\bibitem{Pearson1901}
Karl Pearson.
\newblock {On lines and planes of closest fit to systems of points in space}.
\newblock {\em The London, Edinburgh, and Dublin Philosophical Magazine and
  Journal of Science}, 2(1):559--572, 1901.

\bibitem{Kenney1962}
J.~Kenney and E.~Keeping.
\newblock {Linear regression and correlation}.
\newblock {\em Mathematics of statistics}, 1:252--285, 1962.

\bibitem{Snedecor1967}
G.~W. Snedecor and W.~G. Cochran.
\newblock {\em {Statistical Methods}}.
\newblock Iowa State University Press, Ames, 6 edition, 1967.

\bibitem{Hastie1989}
Trevor Hastie and Werner Stuetzle.
\newblock {Principal Curves}.
\newblock {\em Journal of the American Statistical Association},
  84(406):502--516, 1989.

\bibitem{Gorban2008}
Alexander~N. Gorban and Andrei~Y. Zinovyev.
\newblock {Principal Graphs and Manifolds}.
\newblock {\em Arxiv preprint arXiv08090490}, page~36, 2008.

\end{thebibliography}

\end{document}